\numberwithin{equation}{section}
\newtheorem{thm}{Theorem}[section]
\newtheorem{lem}[thm]{Lemma}
\newtheorem{prp}[thm]{Proposition}
\newtheorem{cor}[thm]{Corollary}
\theoremstyle{definition}
\theoremstyle{remark}
\newtheorem*{ex*}{Example}
\newtheorem*{nt*}{Notation}
\newtheorem*{rm*}{Remark}
\newcommand{\al}{\alpha}
\newcommand{\be}{\beta}
\newcommand{\ga}{\gamma}
\newcommand{\de}{\delta}
\newcommand{\De}{\Delta}
\newcommand{\ep}{\varepsilon}
\newcommand{\la}{\lambda}
\newcommand{\BZ}{\mathbb{Z}}
\newcommand{\BC}{\mathbb{C}}
\newcommand{\Fgl}{\mathfrak{gl}}
\newcommand{\Fsl}{\mathfrak{sl}}
\newcommand{\Fsp}{\mathfrak{sp}}
\newcommand{\Fso}{\mathfrak{so}}
\newcommand{\Fosp}{\mathfrak{osp}}
\newcommand{\Fb}{\mathfrak{b}}
\newcommand{\Fg}{\mathfrak{g}}
\newcommand{\Fh}{\mathfrak{h}}
\newcommand{\Fk}{\mathfrak{k}}
\newcommand{\Fp}{\mathfrak{p}}
\newcommand{\Fr}{\mathfrak{r}}
\newcommand{\CA}{\mathcal{A}}
\newcommand{\CB}{\mathcal{B}}
\newcommand{\CD}{\mathcal{D}}
\newcommand{\CZ}{\mathcal{Z}}
\DeclareMathAlphabet{\mathbbit}{U}{bbm}{m}{sl}
\newcommand{\II}{\mathbbit{I}} 
\newcommand{\defby}{\stackrel{\text{\tiny def}}{=}}
\newcommand{\dm}[1]{\underset{#1}{\diamond}}
\renewcommand{\tilde}{\widetilde}
\renewcommand{\bar}{\overline}
\DeclareMathOperator{\DR}{DR}
\DeclareMathOperator{\Id}{Id}
\title[Parabolic Stabilization and Cutting for Reduction Superalgebras]{Parabolic Stabilization and Cutting\\for Reduction Superalgebras}
\author{Jonas T. Hartwig}
\date{}
\address{Department of Mathematics, Iowa State University, Ames, IA-50011, USA}
\email{jth@iastate.edu}
\urladdr{http://jthartwig.net}
\thanks{This work was supported by the Army Research Office grant W911NF-24-1-0058.}
\begin{document}

\begin{abstract}
The diagonal reduction algebra of a reductive Lie algebra $\mathfrak{g}$ is a localization of the Mickelsson algebra associated to the symmetric pair $(\mathfrak{g}\times\mathfrak{g},\, \mathfrak{g})$. In 2010, Khoroshkin and Ogievetsky introduced the methods of \emph{stabilization and cutting}, which relate the commutation relations in the diagonal reduction algebra of $\mathfrak{gl}_m\oplus\mathfrak{gl}_n$ with those in the diagonal reduction algebra of $\mathfrak{gl}_{m+n}$. We extend this method to a wide range of reduction algebras, including all diagonal and differential reduction algebras for basic classical Lie superalgebras. We show how the method can be used for computing relations in the diagonal reduction algebra of $\mathfrak{so}_8$ and differential reduction algebra of $\mathfrak{sp}_{2n}$.
\end{abstract}

\maketitle

\section{Introduction}

If $\Fg$ is a semisimple Lie subalgebra of a finite-dimensional complex Lie algebra $\tilde\Fg$, and $V$ is a finite-dimensional representation of $\tilde\Fg$, then $V$ is completely reducible as a $\Fg$-module, by Weyl's theorem. The decomposition of $V$ into a direct sum of irreducible $\Fg$-modules can be directly written down once we find an $\Fh$-weight basis for the space of primitive vectors $V^+=\{v\in V\mid \Fg_+v=0\}$, where $\Fg=\Fg_-\oplus\Fh\oplus\Fg_+$ is some fixed triangular decomposition. The vector space $V^+$ carries extra structure coming from the universal enveloping algebra $A=U(\tilde\Fg)$. Namely, let $I_+=A\Fg_+$ be the left ideal in $A$ generated by $\Fg_+$. Let $N=N_A(I_+)=\{a\in A\mid I_+a\subset I_+\}$ be the normalizer of $I_+$ in $A$. Then $N$ is a subalgebra of $A$ containing $I_+$ as a two-sided ideal. It is an exercise to check that $NV^+\subset V^+$ and $I_+V^+=0$. Therefore $V^+$ is a left module over $N/I_+$.

The algebra $S=S(\tilde\Fg,\Fg)=N/I_+$ is Mickelsson's \emph{step algebra} introduced in \cite{M73}. Because $S$ is difficult to describe, he defined a subalgebra $S^\circ$ of $S$ and conjectured that when $V$ is an irreducible representation of $\tilde\Fg$, the space $V^+$ is irreducible as a left $S^\circ$-module. This was proved in \cite{V75}.

In a different approach, also in the early 1970's, the extremal projector method was developed (see \cite{T11} and references therein). This provides a universal map $P:V\to V^+$ expressed in the Chevalley-Serre generators of $\Fg$. Such projectors exist also for basic classical Lie superalgebras, and for quantum groups.

Zhelobenko pointed out that $N/I_+=(A/I_+)^+$, and therefore elements of Mickelsson's step algebra can be accessed using the extremal projector applied to $A/I_+$ viewed as a representation of $\Fg$. This perspective was explored in many papers throughout the 1980's and 1990's, see e.g. \cite{Z89} for the main points.

At this stage we see that we can forget that $A$ was an enveloping algebra. The only thing we need is that there is a homomorphism $\varphi:U(\Fg)\to A$ where $\Fg$ is semisimple (or more generally, reductive), as long as certain technical properties are satisfied that were true before. The resulting algebras have been called generalized Mickelsson algebras, denoted $S(A,\Fg)$, but a more recent term is \emph{reduction algebras}. For the properties $A$ should satisfy, see for example \cite[§3.1]{KO08}.

As the focus in the literature has shifted, or at least broadened, away from the representation theory of $\Fg$ itself to the structure of these reduction algebras for their own sake, a number of important advances has taken place in recent years.

The first is that it is easier to describe the reduction algebra if the image of integer-shifted coroots $h_\be+m$ ($\be\in\De_+, m\in\BZ$) are invertible in $A$. This has to do with the expression for the extremal projector. It can easily be achieved by Ore localization, denoted by $'$. Such localization commutes with the normalizer construction: $S'(A,\Fg)=S(A',\Fg)$. Therefore the original reduction algebra $S(A,\Fg)$ is just a subalgebra.

A second innovation is the double coset realization. Let $I_-=\Fg_-A$ be the right ideal in $A$ generated by $\Fg_-$. Already in \cite[Thm.~1]{M73}, Mickelsson proved that the map (in his setting)
\begin{equation}\label{eq:map}
S(A,\Fg)=N/I_+ \to I_-\backslash A/I_+ = A/(I_-+I_+)
\end{equation}
given by inclusion $N/I_+\to A/I_+$ followed by canonical projection, is injective. This is a remarkable fact, as we are throwing away a lot of information when modding out by $I_-$. In \cite[Thm.~1]{K04}, Khoroshkin proved that, in the localized setting, \eqref{eq:map} is an isomorphism. It is much easier to write down elements of the double coset space $A/\II$, $\II=I_-+I_+$, than elements of $N/I_+$: any element of $A$ can serve as a coset representative. On the other hand, as proved in \cite[Thm.~1]{K04}, the (unique) multiplication $\diamond$ on the double coset space that makes the linear isomorphism \eqref{eq:map} an algebra isomorphism is given by
\begin{equation}
(a+\II) \diamond (b+\II) = (aPb)+\II.
\end{equation}
Here $P$ is the extremal projector for $\Fg$, expressed as an infinite series. This is the trade-off: In $N/I_+$, elements are hard but multiplication is easy. In $A/\II$, elements are easy but multiplication is hard.

In the same paper, \cite{K04}, it was shown that the extremal projector is equivalent to the universal dynamical twist $J$ from \cite{ABRR98} which satisfies the so-called ABRR equation. Besides providing a link to dynamical quantum groups, this gave a new recursive formula for the extremal projector, and hence for the multiplication in reduction algebras.

Another powerful tool is an action of the braid group for $\Fg$ by algebra automorphisms on the reduction algebra \cite{KO08}. This was developed in a general setting for any reductive Lie algebra $\Fg$. In particular, it applies to both diagonal reduction algebras $\DR(\Fg)$ (associated to $\Fg\subset\Fg\times\Fg$) and differential reduction algebras $\CD(\Fg)$ (associated to $U(\Fg)\to W\otimes U(\Fg)$ where $W$ is some Weyl algebra).

Despite all these methods and tools, to the best of our knowledge, the only Lie (super)algebras $\Fg$ where a complete presentation of the diagonal reduction algebra $\DR(\Fg)$ by generators and relations is known are $\Fg=\Fgl_n$ \cite{OK10,KO11,KO17} and $\Fg=\Fosp_{1|2}$ \cite{HW22}. 
Although it was discovered in \cite{KO17} that the relatively involved presentation of $\DR(\Fgl_n)$ from \cite{OK10,KO11} can expressed in dynamical $R$-matrix formalism as single relation, the reflection equation, it is unclear to what extent this can be done in other cases.

Lastly, in \cite{OK10,KO11}, a new method was invented called \emph{Stabilization and Cutting}. Together with the braid group action from \cite{KO08}, these were the main tools in their derivation of the presentation of $\DR(\Fgl_n)$.
The method was formulated only in the setting relevant for their proof, which concerned how commutation relations in $\DR(\Fgl_{n-1})$ compare with those in $\DR(\Fgl_n)$. For many reasons it is convenient to keep the Cartan the same, so really what they looked at was $\DR(\Fgl_m\oplus\Fgl_n)$ and $\DR(\Fgl_{m+n})$, with the case $m=1$ of special (inductive) interest.
To remove any possible source confusion, we emphasize that there are now \emph{four} Lie algebras involved, forming a commutative square
\begin{equation}
\begin{tikzcd}
\Fgl_{m+n}\arrow[r]& \Fgl_{m+n}\times \Fgl_{m+n}\\
\Fgl_m\oplus\Fgl_n\arrow[u]\arrow[r]& (\Fgl_m\oplus\Fgl_n)\times(\Fgl_m\oplus\Fgl_n)\arrow[u]
\end{tikzcd}
\quad\leadsto\quad 
\begin{tikzcd}
 \DR(\Fgl_{m+n})\\
 \DR(\Fgl_m\oplus\Fgl_n)\arrow[u]
\end{tikzcd}
\end{equation}
This does lead to a \emph{linear} map $i:\DR(\Fgl_m\oplus\Fgl_n)\to\DR(\Fgl_{m+n})$ of reduction algebras, as indicated in the diagram. However, $i$ is not an algebra map in general. Stabilization and Cutting can be viewed as a way to measure the extent to which $i$ fails to be an algebra map. More concretely, Stabilization says that $i(xy)-i(x)i(y)$ always belongs to a certain intersection $J$ of a left and a right ideal. Cutting goes the other way, it says that some relations in the big reduction algebra can be ``cut'' (certain terms removed) to provide a commutation relation in the smaller algebra.

It is natural to expect, as suggested at the end of \cite[§3]{OK10}, that this method could be extended to other such commutative squares of Lie algebras. The main result of the present paper is the formulation of a natural and general setting in which the method of stabilization and cutting works well.

We wanted to cover not only diagonal reduction algebras, but also allow other examples such as differential reduction algebras. The second goal was to make sure as much as possible works in the super case. One reason for these goals is the appearance of such reduction algebras in connection to field equations in physics \cite{HUW25,DHW25}. The diagram that summarizes our setting and generalizes the above square is the following:
\begin{equation}
\begin{tikzcd}
U(\Fg)\arrow[r]& B\rtimes U(\Fg)\\
U(\Fk)\arrow[u]\arrow[r]& B_0\rtimes U(\Fk)\arrow[u]
\end{tikzcd}
\quad\leadsto\quad 
\begin{tikzcd}
 \CZ_\Fg\\
 \CZ_\Fk\arrow[u]
\end{tikzcd}
\end{equation}
In this diagram, $\Fk$ is the Levi subalgebra of a parabolic subalgebra of a finite-dimensional complex basic classical Lie superalgebra $\Fg$, $B$ is a left $U(\Fg)$-module algebra, and $B_0$ is a $U(\Fk)$-submodule subalgebra of $B$. The algebra $\CZ_\Fg$ is the double coset realization of the the localized reduction algebra associated to the homomorphism $U(\Fg)\to B\rtimes U(\Fg)$, $u\mapsto 1\otimes u$. Similarly for $\CZ_\Fk$ associated to $U(\Fk)\to B_0\rtimes U(\Fk)$. The appearance of module algebras is explained in the remark of Section \ref{sec:mod}. We refer to Section \ref{sec:setup} for full details.

The main result of the paper is Theorem \ref{thm:stabcut} which is a formulation of Stabilization and Cutting in this setting. As one application, we prove that Cutting provides an algebra map from the (ghost) center of $\CZ_\Fg$ to the (ghost) center of $\CZ_\Fk$, generalizing \cite[§4]{KO11}. The ghost center for the diagonal reduction algebra of $\Fosp_{1|2}$ was computed in \cite{HW23}. The center of differential reduction algebras has been described in \cite{HO17,H18}. For the diagonal reduction algebra of $\Fgl_n$ a conjectured generating set was given in \cite{KO17}.

We also consider some illustrative examples. The case of $\Fso_8$ is particularly interesting, as removing any node from the Dynkin diagram $D_4$ gives a union of type $A$ diagrams. Although we do not complete the presentation in this paper, we show in Section \ref{sec:so8} that the most complicated relations (relations of weight $0$) in $\DR(\Fso_8)$ can be computed using a combination of Stabilization and Cutting, braid group actions, and the known presentation of diagonal reduction algebras of type $A$ from \cite{OK10,KO11}. A future presentation of $\DR(\Fso_8)$ can then serve as the basis for further induction arguments to other simply laced types.
Finally, the differential reduction algebra $\CD(\Fsp_4)$ was computed in \cite{HW24}. We show in Section \ref{sec:sp2n} how to use this and Stabilization to find relations for $\Fsp_{2n}$.

\section{Setup}
\label{sec:setup}

We work over the field $\BC$ of complex numbers. All vector spaces, algebras, and representations are assumed to be complex and $\otimes=\otimes_\BC$. When we say ``space'', ``subspace'', ``algebra'', etc., we mean ``superspace'', ``subsuperspace'', ``superalgebra''.

\subsection{Lie Algebra Data: \texorpdfstring{$(\Fg,\Fh,\De_+, S)$}{(g,h,Delta,S}}
\label{sec:lie}
Let $\Fg$ be a finite-dimensional basic\footnote{there is a non-degenerate even supersymmetric invariant bilinear form on $\Fg$} classical\footnote{$\Fg_{\bar 1}$ is completely reducible as a $\Fg_{\bar 0}$-module} Lie superalgebra.
Fix a Cartan subalgebra $\Fh\subset \Fg_{\bar 0}$.
Let $\Fg=\Fh\oplus(\bigoplus_{\al\in\De} \Fg_\al)$ be the root space decomposition, where $\De=\De(\Fg)\subset\Fh^\ast$ is the set of roots.
Fix a subset of positive roots $\De_+\subset\De$, by definition satisfying $\De=\De_+\sqcup(-\De_+)$ and $(\De_++\De_+)\cap\De\subset\De_+$. Put $\De_-=-\De_+$. 
Let $\Fg_\pm=\bigoplus_{\al\in\De_\pm}\Fg_\al$ be the corresponding positive (negative) nilpotent part of $\Fg$, and $\Fb_\pm=\Fh\oplus\Fg_\pm$ the positive (negative) Borel subalgebra of $\Fg$.
Let $\Pi=\De_+\setminus(\De_++\De_+)$, the set of simple roots.
Lastly, select a subset $S\subset\Pi$. Then we get set decompositions
\[\De=\De^S_-\sqcup \De^S_0\sqcup \De^S_+,\qquad
\De^S_0\defby \De\cap \BZ S,\qquad \De^S_\pm \defby \De_\pm\setminus\De^S_0,\]
and therefore a vector space decomposition
\[\Fg=\Fr_-\oplus\Fk\oplus\Fr_+,\qquad \Fk=\Fh\oplus \bigoplus_{\al\in \De^S_0}\Fg_\al,\qquad \Fr_\pm=\bigoplus_{\al\in\De^S_\pm}\Fg_\al.\]
We emphasize that $\Fk$ and $\Fg$ have the same Cartan subalgebra $\Fh$. We put \[\Fp_\pm = \Fk\oplus\Fr_\pm.\] 
The subspaces $\Fp_\pm$ are the positive (respectively negative) parabolic subalgebra of $\Fg$ associated to $S$, $\Fk$ is the Levi subalgebra, and $\Fr_\pm$ are the nilradicals of $\Fp_\pm$.

\begin{ex*}
\begin{enumerate}[1.]
\item 
The following is the case considered in \cite{OK10}. Let $m$ and $n$ be positive integers.
Let $\Fg=\Fgl_{m+n}$, and $\Fh$ be the set of diagonal matrices in $\Fg$, $\De_+=\{\ep_i-\ep_j\}_{i<j}$, $\ep_i(e_{jj})=\de_{ij}$. Then $\Pi=\{\ep_i-\ep_{i+1}\}_{i=1}^{m+n-1}$. Take $S=\Pi\setminus\{\ep_m-\ep_{m+1}\}$. Then
\[
\Fk=\Fgl_m\oplus\Fgl_n,\qquad
\Fr_+=
\begin{bmatrix}
0_{m\times m} & \ast_{m\times n}   \\
0_{n\times m} & 0_{n\times n}
\end{bmatrix}, \qquad 
\Fr_-=
\begin{bmatrix}
0_{m\times m} & 0_{m\times n}   \\
\ast_{n\times m} & 0_{n\times n}
\end{bmatrix}.
\]
\item Examples of $\Fk\subset\Fg$ obtained by deleting an end vertex from the Dynkin diagram include
\[\Fosp_{1|2n-2}\oplus\Fgl_{0|1}\subset\Fosp_{1|2n},\quad\Fgl_n=\Fgl_{0|n}\subset\Fosp_{1|2n},\quad\Fgl_1\oplus\Fsp_{2n-2}\subset\Fsp_{2n},\quad\Fgl_n\subset\Fsp_{2n}.\]
\item $\Fsp_{2n}=\Fsp_{0|2n}$ is \emph{not} the Levi of any parabolic in $\Fosp_{1|2n}$ because they have equal rank.
\end{enumerate}
\end{ex*}

\begin{nt*}
Let $\theta:\Fg\to\Fg$ be the Cartan anti-automorphism. It swaps $\Fg_\al$ and $\Fg_{-\al}$ for $\al\in\De$, is the identity on $\Fh$, satisfies $\theta([x,y])=(-1)^{|x||y|}[\theta(y),\theta(x)]$ for homogeneous $x,y\in \Fg$ where $x\in\Fg_{|x|}$, and $\theta^2=\Id_{\Fg_{\bar 0}}\oplus (-\Id_{\Fg_{\bar 1}})$ (the parity automorphism of $\Fg$).
Let $U(\Fg)$ be the universal enveloping algebra of $\Fg$. For any algebra $A$ containing $U(\Fh)$, such that $D=U(\Fh)\setminus\{0\}$ is an Ore denominator set in $A$, we let $A'=D^{-1}A$ denote the localization. Let $\bar U(\Fg)$ be the Taylor extension of $U'(\Fg)$. Let $P_\Fg\in \bar U(\Fg)$ be the extremal projector for $\Fg$. It is the unique element of $\bar U(\Fg)$ satisfying
\begin{equation}\label{eq:P}
[\Fh,P_\Fg]=0, \qquad \theta(P_\Fg)=P_\Fg,\qquad \Fg_+P_\Fg=0,\qquad P_\Fg\equiv 1 \mod \Fg_-\bar U(\Fg)\cap \bar U(\Fg)\Fg_+.
\end{equation}
From these properties it is easy to deduce that $P_\Fg\Fg_-=0$ and $(P_\Fg)^2 = P_\Fg$. For details we refer to \cite{T11} and references therein.
Let $Q_+=\BZ_{\ge 0}\Pi$, the positive cone in the root lattice for $\Fg$.
Choose an ordered basis of root vectors for $\Fg$. Without loss of generality $\Fr_-<\Fk<\Fr_+$.
For $\la\in Q_+$, let $\CB_\la$ denote the corresponding basis for the weight space $U(\Fg_+)[\la]$. By \eqref{eq:P}, $P_\Fg$ may be expanded as follows:
\begin{equation}
P_\Fg = \sum_{\la\in Q_+} P_\Fg^\la,\qquad P_\Fg^\la = \sum_{e,e'\in\CB_\la} \theta(e')\xi_{e,e'}e
\end{equation}
where $\xi_{e,e'}\in U'(\Fh)$.
\end{nt*}

We need the following relation between extremal projectors for $\Fg$ and $\Fk$. We have not seen it in the literature in this generality.

\begin{prp}\label{prp:Pk}
$P_\Fg \equiv P_\Fk \mod \Fr_- \bar U(\Fg) \cap \bar U(\Fg)\Fr_+$.
\end{prp}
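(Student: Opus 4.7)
The plan is to derive the congruence from the product factorization of the extremal projector. I choose a normal order on $\De_+$ in which every root in $\De^S_0\cap\De_+$ precedes every root in $\De^S_+$. Such an order is legitimate because $[\Fk,\Fr_\pm]\subset\Fr_\pm$ forces any sum $\al+\be$ with $\al\in\De^S_0\cap\De_+$ and $\be\in\De^S_+$ to lie again in $\De^S_+$ whenever it is a root, so the normal-order axiom is satisfied across the partition. With such an order, the product formula for the extremal projector (see \cite{T11}) factorizes
\[
P_\Fg \;=\; P_\Fk \cdot P_\Fr, \qquad P_\Fr = \prod_{\be\in\De^S_+}^{\to} P_\be,
\]
where the first factor is the ordered product of elementary projectors over the positive roots of $\Fk$, and $P_\Fr$ is the ordered product of elementary projectors over the roots of $\Fr_+$. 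Writing $J=\Fr_-\bar U(\Fg)\cap \bar U(\Fg)\Fr_+$ and using $P_\Fg-P_\Fk=P_\Fk(P_\Fr-1)$, the proof reduces to verifying (a) $P_\Fr\equiv 1\pmod{J}$, and (b) $P_\Fk\cdot J\subset J$.

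For (a), the elementary projector attached to $\be\in\De^S_+$ has the shape $P_\be=1+\sum_{n\ge 1}\phi_n f_\be^n e_\be^n$ with $\phi_n\in U'(\Fh)$, truncated at $n=1$ for an isotropic odd root. Because $f_\be\in\Fr_-$ and $e_\be\in\Fr_+$, each $n\ge 1$ summand lies in $\Fr_-\bar U(\Fg)\cap \bar U(\Fg)\Fr_+=J$, so $P_\be-1\in J$. Expanding $P_\Fr=\prod_\be\bigl(1+(P_\be-1)\bigr)$ and noting that $J\cdot J\subset J$ (both $\Fr_-\bar U(\Fg)$ and $\bar U(\Fg)\Fr_+$ absorb $\bar U(\Fg)$ on the opposite side) gives $P_\Fr-1\in J$.

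For (b), the inclusion $P_\Fk\cdot \bar U(\Fg)\Fr_+\subset \bar U(\Fg)\Fr_+$ is automatic, since the target is a left ideal. The other inclusion $P_\Fk\cdot\Fr_-\bar U(\Fg)\subset\Fr_-\bar U(\Fg)$ comes from $[\Fk,\Fr_-]\subset\Fr_-$: for $x\in\Fk$ and $y\in\Fr_-$, $xy=yx+[x,y]\in\Fr_-\bar U(\Fg)$; induction on PBW length of $U(\Fk)$ gives $U(\Fk)\cdot\Fr_-\subset\Fr_-\bar U(\Fg)$, and localizing at $U(\Fh)\setminus\{0\}$ and applying this term-by-term to the weight decomposition $P_\Fk=\sum_\mu P_\Fk^\mu$ extends the inclusion to $\bar U(\Fk)\cdot\Fr_-\bar U(\Fg)\subset\Fr_-\bar U(\Fg)$. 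Intersecting the two one-sided inclusions closes the argument.

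The main technical input is the factorization $P_\Fg=P_\Fk\cdot P_\Fr$: one must verify that the product formula of \cite{T11} is valid in the basic classical super setting for a normal order compatible with the parabolic splitting, which in particular requires careful treatment of odd (and possibly non-isotropic) roots. Once this input is granted, the rest is the ideal-manipulation argument sketched above.
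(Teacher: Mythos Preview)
Your argument is correct once the factorization $P_\Fg = P_\Fk \cdot P_\Fr$ is granted, but note that this identity hides a point you do not address: the elementary factors $P_\be$ in the product formula for $P_\Fg$ carry shifts by $\rho_\Fg$, whereas those in the product formula for $P_\Fk$ carry shifts by $\rho_\Fk$. Identifying the first block of factors with $P_\Fk$ therefore requires $(\rho_\Fg-\rho_\Fk)(h_\be)=0$ for every $\be\in\De^S_0$. This holds precisely because $\Fk$ is a Levi subalgebra (so $\rho_\Fg-\rho_\Fk$ is orthogonal to $\Span S$), but it deserves mention alongside the super-case caveats you already flag.

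The paper takes a genuinely different and more self-contained route. Rather than invoking any product formula, it defines the truncation $\tilde P_\Fk=\sum_{\la\in Q_+(\Fk)} P_\Fg^\la$ directly from the weight expansion of $P_\Fg$, observes that $P_\Fg-\tilde P_\Fk\in J$ by PBW with the ordered basis $\Fr_-<\Fk<\Fr_+$, and then proves $\tilde P_\Fk=P_\Fk$ by verifying the axiomatic characterization \eqref{eq:P} of the extremal projector for $\Fk$. The only non-obvious axiom is $\Fk_+\tilde P_\Fk=0$, and this follows from the squeeze
\[
\Fk_+\tilde P_\Fk \;=\; -\Fk_+(P_\Fg-\tilde P_\Fk)\;\subset\; \bar U(\Fk)\cap J \;=\; 0,
\]
using that $J$ is a $U(\Fk)$-bimodule (your observation (b)) and that $\Fk_+P_\Fg=0$. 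The advantage of the paper's approach is that it uses nothing beyond existence and uniqueness of the extremal projector---no normal orderings, no product formula, no $\rho$-bookkeeping---so it applies uniformly across all basic classical Lie superalgebras without the case analysis you anticipate at the end. Your approach is more constructive and makes the structure of $P_\Fg-P_\Fk$ transparent, at the cost of importing heavier machinery whose validity in full generality is exactly what you leave open.
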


\begin{proof}
Consider the truncation
\begin{equation}
\tilde P_\Fk = \sum_{\la\in Q_+(\Fk)} P_\Fg^\la.
\end{equation}
It suffices to show that $\tilde P_\Fk=P_\Fk$. We do this by verifying that $\tilde P_\Fk$ satisfies the properties \eqref{eq:P} of an extremal projector for $\Fk$. The only property that is not immediate is that $\Fk_+ \tilde P_\Fk=0$.
Let $J=\Fr_- \bar U(\Fg)\cap \bar U(\Fg)\Fr_+$.
By our choice of ordered basis for $\Fg$,
\begin{equation}
P_\Fg-\tilde P_\Fk \in J.
\end{equation}
Note that $J$ is a $U(\Fk)$-bimodule, since $\Fk\Fr_-=\Fr_-\Fk+[\Fk,\Fr_-]\subset \Fr_-\Fk+\Fr_-$ and likewise on the right. In particular, $\Fk_+(P_\Fg-\tilde P_\Fk)\subset J$. On the other hand, $\Fk_+(P_\Fg-\tilde P_\Fk)\subset \bar U(\Fk)$ since $\Fk_+P_\Fg\subset \Fg_+P_\Fg=0$.
Consequently,
\begin{equation}
\Fk_+\tilde P_\Fk = \Fk_+(P_\Fg-\tilde P_\Fk) \subset \bar U(\Fk)\cap J.
\end{equation}
By the PBW theorem for $U(\Fg)$, the intersection $\bar U(\Fk)\cap J$ is zero.
\end{proof}

\subsection{Module Algebra Data: \texorpdfstring{$(B,B_0,B_\pm)$}{(B,B0,B+-)}}
\label{sec:mod}

We keep all notation from Section \ref{sec:lie}.
Let $B$ be a $U(\Fg)$-module algebra.
Equivalently, $B$ is an associative algebra on which $\Fg$ acts by derivations.
We denote the action of $u\in U(\Fg)$ on $b\in B$ by $u\cdot b$.
We assume that $B$ is a weight module with respect to $\Fh$ and that $\Fg$ acts locally finitely on $B$.
We further assume that $B$ has a decomposition
\begin{equation}\label{eq:B}
B=B_0\oplus (B_- + B_+)
\end{equation}
such that
\begin{enumerate}[{\rm 1.}]
\item $B_0$ is a subalgebra and a $\Fk$-submodule of $B$,
\item $B_+$ is a finitely generated left ideal, a right $B_0$-submodule, and a $\Fp_+$-submodule of $B$,
\item $B_-$ is a finitely generated right ideal, a left $B_0$-submodule, and a $\Fp_-$-submodule of $B$,
\item $\Fr_+\cdot B_0\subset B_+$ and $\Fr_-\cdot B_0\subset B_-$.
\end{enumerate}
We call this a \emph{triangular decomposition of $B$ compatible with $(\De_+,S)$}.
Choose finite-dimensional generating subspaces $M_\pm\subset B_\pm$. Since the action of $\Fg$ on $B$ is locally finite, we may assume without loss of generality that $M_\pm$ is an $\Fh$-weight $\Fp_\pm$-submodule of $B_\pm$.

\begin{ex*}
\begin{enumerate}[{\rm 1.}]
\item For any $(\Fg,\Fh,\De_+,S)$, $B=U(\Fg)$ has a compatible triangular decomposition
\begin{equation}\label{eq:U-tri}
U(\Fg)=U(\Fk)\oplus(\Fr_-U(\Fg)+ U(\Fg)\Fr_+).
\end{equation}
\item The $n$:th Weyl algebra $W(2n)$ is a $U(\Fsp_{2n})$-module algebra. Choosing $\De_+=\{\ep_i-\ep_j\}_{i<j}\cup\{\ep_i+\ep_j\}_{i,j=1}^n$ gives $\Pi=\{\ep_1-\ep_2,\,\cdots,\,\ep_{n-1}-\ep_n,\,2\ep_n\}$. Let $S=\Pi\setminus\{\ep_1-\ep_2\}$.
Then $\Fk\cong\Fgl_1\oplus\Fsp_{2n-2}$ and a compatible triangular decomposition is 
\begin{equation}\label{eq:D-tri}
W(2n)=W(2n-2)\oplus (\partial_1 W(2n)+ W(2n)x_1)
\end{equation}
where $W(2n-2)$ denotes the subalgebra of $W(2n)$ generated by $\{x_i,\,\partial_i\}_{i=2}^n$.
\end{enumerate}
\end{ex*}

\subsection{Pairs of Reduction Algebras} 

Given $(\Fg,\Fh,\De_+,S; B,B_0,B_\pm)$, let $A_\Fg$ be the smash product $B\rtimes U(\Fg)$.
By definition, (i) $A_\Fg$ contains $B$ and $U(\Fg)$ as subalgebras, (ii) the multiplication map $B\otimes U(\Fg)\to A_\Fg$, $a\otimes b\mapsto ab$ is a vector space isomorphism, and (iii) the cross relation $xb = (-1)^{|x||b|} bx + x\cdot b$ holds all for homogeneous $x\in\Fg, b\in B$, where $\cdot$ is the $\Fg$-module action on $B$.
These properties determine $A_\Fg$ uniquely (up to isomorphism) as an associative algebra.
Define the double coset space
\begin{equation}
\CZ_\Fg = A_\Fg'/\II_\Fg,\qquad \II_\Fg = \Fg_-A_\Fg' + A_\Fg'\Fg_+,
\end{equation}
equipped with the diamond product
\begin{equation}\label{eq:diamond-product}
(a+\II_\Fg)\dm{\Fg}(b+\II_\Fg) = \sum_{\la\in Q_+} \big(a P_\Fg^\la b + \II_\Fg),\qquad\forall a,b\in A_\Fg'.
\end{equation}
Since $B$ is locally $\Fg$-finite, only finitely many terms in \eqref{eq:diamond-product} are nonzero.
The algebra $(\CZ_\Fg,\dm{\Fg})$ is an associative algebra.
It is isomorphic to $N/I$ where $I=A_\Fg'\Fg_+$ and $N$ is the normalizer of $I$ in $A_\Fg'$.
Likewise, let $A_\Fk=B_0\rtimes U(\Fk)$ and
\begin{equation}
\CZ_\Fk = A_\Fk'/\II_\Fk,\qquad \II_\Fk = \Fk_-A_\Fk'+A_\Fk'\Fk_+,
\end{equation}
equipped with the diamond product
\begin{equation}
(a+\II_\Fk)\dm{\Fk}(b+\II_\Fk) = \sum_{\la\in Q_+(\Fk)} (aP_\Fk^\la b+\II_\Fk),\qquad \forall a,b\in A_\Fk'.
\end{equation}
Here $Q_+(\Fk)=\BZ_{\ge 0}S$. It is the positive cone in the root lattice for $\Fk$.
The inclusion $A_\Fk'\to A_\Fg'$ takes $\II_\Fk$ into $\II_\Fg$ and therefore induces an injective linear map 
\begin{equation}\label{eq:inclusion}
i:\CZ_\Fk\to \CZ_\Fg.
\end{equation}
In general, this is not an algebra map.

\begin{ex*}
\begin{enumerate}[1.]
\item Following Example 1 from Section \ref{sec:mod}, we get a linear inclusion of diagonal reduction algebras $i:\DR(\Fk)\to \DR(\Fg)$.
\item Following Example 2 from Section \ref{sec:mod}, we get a linear inclusion of differential reduction algebras $i:\CD(\Fgl_1\oplus\Fsp_{2n-2})\to \CD(\Fsp_{2n})$.
\end{enumerate}
\end{ex*}

\begin{rm*}
If $B$ is an associative algebra and $\varphi:U(\Fg)\to B$ is an algebra homomorphism, then $B$ becomes a left $\Fg$-module by defining $x\cdot b = [\varphi(x),b]$ for $x\in\Fg$ and $b\in B$.
One may now localize $B$ and build a double coset algebra like above. But, we get a better behaved\footnote{More precisely, the reduction algebra will be a flat deformation of $U'(\Fh)\otimes B$. And in any case the counit provides a map $U(\Fg)\otimes B\to B$ which induces a surjective algebra map on reduction algebras.} algebra if we instead consider $\CA=U(\Fg)\otimes B$ and use $\Phi:U(\Fg)\to\CA$, $\Phi=(\Id\otimes\varphi)\circ\De$, where $\De$ here denotes the comultiplication $\De:U(\Fg)\to U(\Fg)\otimes U(\Fg)$.
Then $\Phi$ can be extended to an isomorphism of associative algebras
\begin{equation}
A=B\rtimes U(\Fg)\longrightarrow \CA
\end{equation}
by requiring $b\mapsto 1\otimes b$ for $b\in B$. Under this map, the denominator set $D\subset U(\Fh)$ is mapped to $\Phi(D)$. Therefore we get an induced surjection $A'\to\CA'$ where $\CA'$ is the localization of $\CA$ at the denominator set $\Phi(D)$. Furthermore, the left and right ideals $A'\Fg_+$, $\Fg_- A'$, are mapped to $\CA'\Phi(\Fg_+)$, $\Phi(\Fg_-)\CA'$ and likewise the extremal projector is mapped to the one where all root vectors $e_\be$ have been replaced by $\Phi(e_\be)$. The conclusion is that the ``traditional'' reduction algebra $\CA'/(\Phi(\Fg_-)\CA'+\CA'\Phi(\Fg_+))$ is isomorphic to $A'/\II_\Fg$. Therefore, working with module algebras is both more general while providing a simpler description of objects like $\II_\Fg$, the denominator set, the extremal projector in the diamond product, etc. (as no homomorphism $\varphi$ is needed; the action of $U(\Fg)$ on $B$ is built-in to the algebra structure of the smash product $B\rtimes U(\Fg)$).
\end{rm*}

\section{Stabilization and Cutting}
We keep all notation from Section \ref{sec:setup}.
Let $\overline{M_\pm}=\pi(M_\pm)$ where $\pi:A'_\Fg\to\CZ_\Fg$ is the canonical projection.
Let $V_+$ be the left ideal of $\CZ_\Fg$ generated by $\overline{M_+}$.
Let $V_-$ be the right ideal of $\CZ_\Fg$ generated by $\overline{M_-}$.
The following generalizes \cite[Lemma~5]{OK10},\cite[Lemma~4]{KO11}.

\begin{lem}\label{lem:V-and-V'}
The following equalities hold in the reduction algebra $\CZ_\Fg$:
\begin{enumerate}[{\rm (i)}] 
\item $V_+=\pi(A'_\Fg M_+)$,
\item $V_-=\pi(M_-A'_\Fg)$.
\end{enumerate}
\end{lem}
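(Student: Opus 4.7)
The plan is to prove each equality by double inclusion; I focus on (i), since (ii) follows from the symmetric argument using the $\theta$-symmetry $\theta(P_\Fg)=P_\Fg$.

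The core reduction, modulo $\II_\Fg$, is: for $e\in U(\Fg_+)[\la]$ and $m\in M_+$,
\[em\equiv e\cdot m\pmod{A'_\Fg\Fg_+}.\]
This uses the smash product identity $em=\sum\pm(e_{(1)}\cdot m)\,e_{(2)}$ (Sweedler notation with super-signs): since $\De(U(\Fg_+))\subset U(\Fg_+)\otimes U(\Fg_+)$, every term with $e_{(2)}$ in the augmentation ideal $\Fg_+U(\Fg_+)$ already lies in $A'_\Fg\Fg_+$, and the counit identity $\sum\epsilon(e_{(2)})e_{(1)}=e$ collapses what remains to $e\cdot m$. Substituting into the expansion $P_\Fg^\la=\sum_{e,e'\in\CB_\la}\theta(e')\xi_{e,e'}e$ and commuting each $\xi_{e,e'}\in U'(\Fh)$ past the weight vector $e\cdot m$ (which only shifts its Cartan argument) gives, for any $a\in A'_\Fg$,
\[aP_\Fg^\la m\equiv\sum_{e,e'\in\CB_\la}a\,\theta(e')\,\tilde\xi_{e,e'}(h)\,(e\cdot m)\pmod{\II_\Fg},\]
with $\tilde\xi_{e,e'}\in U'(\Fh)$.

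For $V_+\subseteq\pi(A'_\Fg M_+)$: since $M_+$ is a $\Fp_+$-submodule, $e\cdot m\in M_+$, so the right-hand side lies in $A'_\Fg M_+$. Summing over $\la$ and applying $\pi$ yields $\pi(a)\dm{\Fg}\pi(m)=\pi(aP_\Fg m)\in\pi(A'_\Fg M_+)$. Since $V_+$ is the left ideal of $\CZ_\Fg$ generated by $\overline{M_+}$, it is spanned by such products, giving the inclusion.

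For the reverse inclusion, I rearrange the reduction formula as
\[\pi(am)=\pi(a)\dm{\Fg}\pi(m)\;-\;\sum_{\la>0}\sum_{e,e'\in\CB_\la}\pi\bigl(a\,\theta(e')\,\tilde\xi_{e,e'}(h)\,(e\cdot m)\bigr)\]
and proceed by downward induction on the weight of $m$ within the finite $\Fh$-weight support of $M_+$. The base case is $m$ of top weight, where $e\cdot m=0$ for every $\la>0$ (as $\Fg_+$ would raise the weight outside the weight support of $M_+$), so the correction vanishes and $\pi(am)=\pi(a)\dm{\Fg}\pi(m)\in V_+$. In the inductive step each surviving correction is of the form $\pi(a''m')$ with $m'=e\cdot m\in M_+$ of strictly higher weight, hence lies in $V_+$ by the induction hypothesis; since $V_+$ is a subspace, $\pi(am)\in V_+$.

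The main subtlety is pinning down the reduction $em\equiv e\cdot m\pmod{A'_\Fg\Fg_+}$ with the correct super-signs; once that is in place, the induction terminates automatically because $M_+$ is finite-dimensional and weight-graded. Part (ii) follows by the mirror computation: the analogous reduction $mu\equiv\pm u'\cdot m\pmod{\Fg_-A'_\Fg}$ (for $u\in U(\Fg_-)[-\la]$, $m\in M_-$, with $u'\in U(\Fg_-)[-\la]$ the appropriately reordered element) reduces $\pi(m)\dm{\Fg}\pi(a)$ into $\pi(M_-A'_\Fg)$, and the downward induction is now anchored at the \emph{bottom} weight of the finite-dimensional $\Fp_-$-module $M_-$.
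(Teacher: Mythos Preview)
Your proof is correct and follows essentially the same approach as the paper: the key reduction $em\equiv e\cdot m\pmod{A'_\Fg\Fg_+}$ together with downward induction over the finite weight support of the $\Fp_+$-module $M_+$. For the easy inclusion $V_+\subset\pi(A'_\Fg M_+)$ the paper is slightly more direct---it simply observes that $\bar a\dm{\Fg}\pi(bx)=\sum_\la\overline{(aP_\Fg^\la b)\,x}\in\pi(A'_\Fg M_+)$ because $aP_\Fg^\la b\in A'_\Fg$, so $\pi(A'_\Fg M_+)$ is automatically a left ideal without invoking the reduction or the $\Fp_+$-stability of $M_+$---but your route works equally well.
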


\begin{proof}
For part (i), we first show that $V_\Fk\subset \pi(A'_\Fg M_+)$. 
Since $A'_\Fg$ is unital, we have $\overline{M_+}\subset \pi(A'_\Fg M_+)$.
So, by definition of $V_+$, it suffices to show that $\pi(A'_\Fg M_+)$ is a left ideal of $\CZ_\Fg$.
For any $a,b\in A'_\Fg$ and $x\in M_+$, we have
\[\bar a\dm{\Fg} \pi(bx) = \sum_{\la\in Q_+(\Fg)} \overline{aP_\Fg^\la bx}\]
with at most finitely many nonzero terms.
Each of those terms have the form $\overline{cx}$ for some $c\in A'_\Fg$, and $\overline{cx}=\pi(cx)\in\pi(A'_\Fg M_+)$.
This proves the $\subset$ inclusion.

For the reverse inclusion, recall that $M_+$ is a finite-dimensional weight $\Fp_+$-submodule of $B_+$.
We will work with the decomposition of $M_+$ into weight spaces:
\[M_+=\bigoplus_{\be\in\Fh^\ast}M_+[\be],\qquad M_+[\be]=\{x\in M_+\mid h\cdot x=\la(h)x,\,\forall h\in\Fh\}.\]
We will use the partial order $\le$ on $\Fh^\ast$ given by $\la\le \mu$ iff $\mu-\la\in Q_+(\Fg)$.
We use $\la<\mu$ to mean $\la\le\mu$ and $\la\neq\mu$.
This restricts to a partial order on the set $\De(M_+)=\{\la\in\Fh^\ast\mid M_+[\la]\neq 0\}$ of weights.
We prove by induction that all weights $\be$ of $M_+$ satisfy
\begin{equation}\label{eq:lem-pf-inclusion}
\pi\big(A'_\Fg (M_+[\be])\big)\subset 
\sum_{\mu\in Q_+(\Fg)}\CZ_\Fg\dm{\Fg}\overline{M_+[\be+\mu]}.
\end{equation}
For the base case we consider the set of maximal elements of $\De(M_+)$.
These weights $\be$ necessarily satisfy $\Fg_+\cdot M_+[\be]=0$.
Let $z\in M_+[\be]$ and $a\in A'_\Fg$.
Then $\bar a\dm{\Fg} \overline{z}=\overline{aP_\Fg z}=\overline{az}$ because $e_{\ga}\cdot z=0$ for all positive roots $\ga$.
Thus \eqref{eq:lem-pf-inclusion} holds in this case.

For the induction step, suppose \eqref{eq:lem-pf-inclusion} has been proved for all weights in some subset $\Omega\subset\De(M_+)$. Let $\be$ such that $\be+(Q_+\setminus\{0\})\cap\De(M_+)\subset\Omega$.
For any $z\in M_+[\be]$ and $a\in A'_\Fg$,
\begin{align*}
\bar a\dm{\Fg} \bar z &= az+\sum_{0\neq\la\in Q_+} a P_\Fg^\la z + \II_\Fg\\
&=az+\sum_{0\neq\la\in Q_+}\sum_{e,e'\in\CB_\la} a\theta(e')\xi_{e,e'}e z +\II_\Fg\\ 
&=az+\sum_{0\neq\la\in Q_+}\sum_{e,e'\in\CB_\la} a\theta(e')\xi_{e,e'} (e\cdot z) + \II_\Fg,
\end{align*}
The weight of $e\cdot z$ equals $\be+\la$. Therefore, in nonzero terms, $\be+\la\in\Omega$.
Therefore, $\overline a\dm{\Fg}\overline z - \pi(az)$ belongs to the right hand side of \eqref{eq:lem-pf-inclusion} by the induction hypothesis applied to each nonzero term.
Consequently $\pi(az)$ also belongs to the right hand side of \eqref{eq:lem-pf-inclusion}. 
This proves the induction step.
Since \eqref{eq:lem-pf-inclusion} holds for all weights $\be$ of $M_+$, and the right hand side of \eqref{eq:lem-pf-inclusion} is contained in $V_+$, we conclude that $\pi(A'_\Fg M_+)\subset V_+$.
Part (ii) is proved in the same way.
\end{proof}

\begin{prp}\label{prp:hc}
Let $I = V_- + V_+$. Then
\begin{equation}\label{eq:I}
\CZ_\Fg = I \oplus i(\CZ_\Fk).
\end{equation}
\end{prp}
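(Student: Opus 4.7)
The plan is to prove the sum and directness separately, with the main work concentrated in the second part.

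\textbf{Sum step.} Using the vector-space PBW isomorphism $A_\Fg' \cong U(\Fr_-) \otimes B \otimes U'(\Fk) \otimes U(\Fr_+)$, any $a \in A_\Fg'$ has a unique expansion $\sum y_i b_i u_i x_i$ with $y_i \in U(\Fr_-)$, $b_i \in B$, $u_i \in U'(\Fk)$, $x_i \in U(\Fr_+)$. A term with $y_i \neq 1$ lies in $\Fr_- A_\Fg' \subset \II_\Fg$, and a term with $x_i \neq 1$ lies in $A_\Fg' \Fr_+ \subset \II_\Fg$. So modulo $\II_\Fg$, I may reduce to $a \equiv \sum b_i u_i$ with $b_i \in B$, $u_i \in U'(\Fk)$. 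Splitting $b_i = b_{i,0} + c_{i,+} + c_{i,-}$ via $B = B_0 \oplus (B_- + B_+)$, the piece $\sum b_{i,0} u_i$ lies in $A_\Fk'$, so its image under $\pi$ is in $\pi(A_\Fk') = i(\CZ_\Fk)$. For each $c_{i,+} u_i$: since $B_+ = B M_+$ as a left ideal, I write $c_{i,+} = \sum_j b_{ij} m_{ij}$ with $m_{ij} \in M_+$ and push $u_i$ past each $m_{ij}$ using the Hopf identity $m u = \sum u_{(2)}(S(u_{(1)}) \cdot m)$, noting $S(u_{(1)}) \cdot m_{ij} \in M_+$ since $M_+$ is a $\Fk$-submodule. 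This places $c_{i,+} u_i$ into $A_\Fg' M_+$, so $\pi(c_{i,+} u_i) \in V_+$ by Lemma~\ref{lem:V-and-V'}. Symmetrically, $\pi(c_{i,-} u_i) \in V_-$ using $B_- = M_- B$.

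\textbf{Directness step.} I construct a linear retraction $\psi : A_\Fg' \to A_\Fk'$ of the inclusion $A_\Fk' \hookrightarrow A_\Fg'$ satisfying $\psi(\II_\Fg) \subset \II_\Fk$, $\psi(A_\Fg' M_+) = 0$, and $\psi(M_- A_\Fg') = 0$. Using the same PBW ordering together with $B = B_0 \oplus C$ (where $C = B_- + B_+$) and $U(\Fr_\pm) = \BC \cdot 1 \oplus U(\Fr_\pm) \Fr_\pm$, define $\psi$ as the projection onto the distinguished summand $1 \otimes B_0 \otimes U'(\Fk) \otimes 1$, identified with $A_\Fk'$. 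Granting the three displayed identities, if $z \in I \cap i(\CZ_\Fk)$ then I lift $z$ as $\pi(\tilde z)$ with $\tilde z \in A_\Fk'$ and (by Lemma~\ref{lem:V-and-V'}) as $\pi(v_+ + v_-)$ with $v_+ \in A_\Fg' M_+$, $v_- \in M_- A_\Fg'$, so that $\tilde z - v_+ - v_- \in \II_\Fg$; applying $\psi$ yields $\tilde z \in \II_\Fk$, hence $z = 0$.

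The identities $\psi(\Fr_- A_\Fg') = 0 = \psi(A_\Fg' \Fr_+)$ are immediate from the positive $\Fr_\pm$-degree. For $\psi(A_\Fg' M_+) = 0$, commuting $m \in M_+$ leftward into the $B$-slot through $U(\Fr_+)$ and $U'(\Fk)$ keeps $m$ inside $M_+$ (since $M_+$ is a $\Fp_+$-submodule), so the $B$-entry lands in $B \cdot M_+ \subset B_+ \subset C$, which $\psi$ kills; symmetrically for $M_- A_\Fg'$ using $M_- \cdot B \subset B_-$. The main obstacle is the Levi portion of $\psi(\II_\Fg) \subset \II_\Fk$. For $\psi(A_\Fg' \Fk_+)$: commuting $\xi \in \Fk_+$ past the $U(\Fr_+)$-factor yields a bracket correction of positive $\Fr_+$-degree (killed by $\psi$) and a principal term absorbed into the $U'(\Fk)$-slot, landing in $A_\Fk' \Fk_+ \subset \II_\Fk$. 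For $\psi(\Fk_- A_\Fg')$: commuting $\xi \in \Fk_-$ past the $U(\Fr_-)$-factor produces only terms of positive $\Fr_-$-degree, leaving a principal $\xi$ adjacent to the $B$-slot; commuting this past $b \in B$ via $\xi b = (-1)^{|\xi||b|} b\xi + \xi \cdot b$ introduces a cross term $\xi \cdot b$. Here the critical observation is that $B_0$, $B_-$, and $B_+$ are each $\Fk$-submodules of $B$, so the $\Fk$-action preserves the splitting $B = B_0 \oplus C$; a direct computation in $A_\Fk'$ using the cross relation then shows that the two surviving $\psi$-contributions combine to produce an element of $\Fk_- A_\Fk' \subset \II_\Fk$.
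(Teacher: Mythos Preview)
Your argument is correct, but it takes a considerably more hands-on route than the paper's.  The paper observes that the composite
\[
U'(\Fh)\otimes B \;\hookrightarrow\; A'_\Fg \;\xrightarrow{\pi}\; \CZ_\Fg
\]
is a linear isomorphism (this is the PBW statement for the smash product, after killing $\Fg_\pm$).  Under this isomorphism the summand $U'(\Fh)\otimes B_0$ is exactly $i(\CZ_\Fk)$, while $U'(\Fh)\otimes B_\pm$ is exactly $V_\pm$ by Lemma~\ref{lem:V-and-V'} (using $A'_\Fg M_+ = A'_\Fg B_+$ and that $B_+$ is $\Fg_+$-stable).  The assumed decomposition $B = B_0 \oplus (B_- + B_+)$ then transports directly to $\CZ_\Fg = i(\CZ_\Fk)\oplus I$, and both the sum and the directness are immediate.

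Your approach instead builds an explicit linear retraction $\psi:A'_\Fg\to A'_\Fk$ from a four-factor PBW decomposition and verifies by hand that $\psi$ respects the relevant ideals.  This works, and your verification that $\psi(\Fk_- A'_\Fg)\subset \II_\Fk$ via the $\Fk$-equivariance of the projection $B\to B_0$ is exactly the right idea.  The trade-off is that you are essentially reproving, piece by piece, the single isomorphism $\CZ_\Fg\cong U'(\Fh)\otimes B$ that the paper invokes once; your Hopf-algebra commutation in the sum step and your $\psi$-construction in the directness step both collapse into that one identification.  (Minor point: your Sweedler identity is stated for $u\in U'(\Fk)$, where the coproduct is not literally defined; you should phrase the commutation as $M_+ U'(\Fk)\subset U'(\Fk) M_+$, which follows from the $\Fk$-stability of $M_+$ together with $M_+ U'(\Fh)=U'(\Fh)M_+$ on weight vectors.)
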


\begin{proof}
The inclusion $U'(\Fh)\otimes B\to A'_\Fg$ followed by canonical projection $A'_\Fg\to\CZ_\Fg$ gives an isomorphism of left $U'(\Fh)$-modules
\[U'(\Fh)\otimes B \overset{\simeq}{\longrightarrow} \CZ_\Fg.\]
Under this map, the image of $U'(\Fh)\otimes B_0$ equals $i(\CZ_\Fk)$.
By Lemma \ref{lem:V-and-V'}, the image of $U'(\Fh)\otimes B_\pm$ equals $V_\pm$.
\end{proof}

The following generalizes the methods of Stabilization and Cutting introduced in \cite{OK10,KO11} for the special case $(\Fg,\Fk)=(\Fgl_{m+n},\,\Fgl_m\oplus\Fgl_n)$ and $B=U(\Fgl_{m+n})$:

\begin{thm}\label{thm:stabcut}
Let $I=V_-+V_+$ and $J=V_-\cap V_+$.
\begin{enumerate}[{\rm (i)}]
\item (Stabilization) Suppose that we have a relation in $\CZ_\Fk$ of the form
\begin{subequations}
\begin{equation}\label{eq:stabilizing-assumption}
\sum_j x_j\dm{\Fk}y_j=0.
\end{equation}
where $x_j, y_j\in\CZ_\Fk$. Then there is an element $z\in J$ such that the  relation
\begin{equation}\label{eq:stabilizing-conclusion}
\sum_j i(x_j)\dm{\Fg}i(y_j)=z,
\end{equation}
\end{subequations}
holds in $\CZ_\Fg$, where $i:\CZ_\Fk\to\CZ_\Fg$ is the linear inclusion from \eqref{eq:inclusion}.
\item (Cutting) Suppose that we have a relation in $\CZ_\Fg$ of the form
\begin{subequations}
\begin{equation}\label{eq:cutting-assumption}
\sum_j i(x_j)\dm{\Fg}i(y_j)=u,
\end{equation}
where $x_j,y_j\in \CZ_\Fk$ and $u\in I$. Then, in $\CZ_\Fk$,
\begin{equation}\label{eq:cutting-conclusion}
\sum_j x_j\dm{\Fk}y_j=0
\end{equation}
\end{subequations}
and furthermore, it is necessarily the case that $u\in J$.
\end{enumerate}
\end{thm}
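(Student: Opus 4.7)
The plan is to establish the single identity
\[
\sum_j i(x_j) \dm{\Fg} i(y_j) \;=\; i\Bigl(\sum_j x_j \dm{\Fk} y_j\Bigr) \,+\, z
\]
in $\CZ_\Fg$, where $z$ is the class of $\sum_j \tilde x_j (P_\Fg - P_\Fk) \tilde y_j$ for any lifts $\tilde x_j, \tilde y_j \in A_\Fk'$ of $x_j, y_j$, and then prove that $z \in J = V_- \cap V_+$. Splitting $P_\Fg = P_\Fk + (P_\Fg - P_\Fk)$ inside formula \eqref{eq:diamond-product} produces this identity, once one notices that $\sum_j \tilde x_j P_\Fk \tilde y_j$ represents $i(\sum_j x_j \dm{\Fk} y_j)$ in $\CZ_\Fg$; this in turn follows from the inclusion $\II_\Fk \subset \II_\Fg$ (since $\Fk_\pm \subset \Fg_\pm$ and $A_\Fk' \subset A_\Fg'$), which is what makes the induced map $A_\Fk'/\II_\Fk \to A_\Fg'/\II_\Fg$ agree with $i$.

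To show $z \in V_+$, I would invoke Proposition \ref{prp:Pk} to write $P_\Fg - P_\Fk = \sum_{\la \notin Q_+(\Fk)} P_\Fg^\la = \sum \theta(e') \xi_{e,e'} e$, and exploit the PBW ordering $\Fr_- < \Fk < \Fr_+$ to factor each basis element $e \in \CB_\la$ with $\la \notin Q_+(\Fk)$ as $e = e_L x^+$ with $x^+ \in \Fr_+$. Writing $\tilde y_j = \sum_k b_k u_k$ with $b_k \in B_0$, $u_k \in U(\Fk)'$, the cross relation $x^+ \tilde y_j = (-1)^{|x^+||\tilde y_j|} \tilde y_j x^+ + x^+ \cdot \tilde y_j$ puts the first summand into $A_\Fg' \Fg_+ \subset \II_\Fg$, reducing each term of $\tilde x_j \theta(e') \xi_{e,e'} e \tilde y_j$ modulo $\II_\Fg$ to $\tilde x_j \theta(e') \xi_{e,e'} e_L (x^+ \cdot \tilde y_j)$. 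The derivation property of the $\Fg$-action combined with the compatibility $\Fr_+ \cdot B_0 \subset B_+$ then places $x^+ \cdot \tilde y_j$ in $B_+ U(\Fk)' + B_0 \Fr_+ U(\Fk)'$. The second summand rewrites, via $\Fr_+ U(\Fk)' \subset U(\Fk)' \Fr_+$, as an element of $A_\Fk' \Fr_+ \subset \II_\Fg$; for the first, a routine induction using the cross relation together with $\Fk \cdot B_+ \subset B_+$ gives $B_+ U(\Fk)' \subset U(\Fk)' B_+$, so the whole expression sits in $A_\Fg' B_+$, which by Lemma \ref{lem:V-and-V'} projects into $V_+$. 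A completely symmetric computation, using $P_\Fg - P_\Fk \in \Fr_- \bar U(\Fg)$ and the dual compatibilities $\Fr_- \cdot B_0 \subset B_-$ and $\Fk \cdot B_- \subset B_-$, yields $z \in V_-$; hence $z \in J$.

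Granting the identity and $z \in J$, part (i) is immediate: if $\sum_j x_j \dm{\Fk} y_j = 0$, then $\sum_j i(x_j) \dm{\Fg} i(y_j) = z \in J$. Part (ii) combines the identity with Proposition \ref{prp:hc}: one has $i(\sum_j x_j \dm{\Fk} y_j) = u - z \in I$, but $I \cap i(\CZ_\Fk) = 0$, forcing both sides to vanish, which yields $\sum_j x_j \dm{\Fk} y_j = 0$ (by injectivity of $i$) and $u = z \in J$. The main obstacle will be the bookkeeping in the middle paragraph, where one must verify carefully that every super-commutator residue produced by pushing $\Fr_\pm$ across elements of $A_\Fk'$ either vanishes modulo $\II_\Fg$ or is manifestly in the target ideal $A_\Fg' B_+$ (respectively $B_- A_\Fg'$); it is precisely the four compatibility axioms on $(B, B_0, B_\pm)$ from Section \ref{sec:mod} that make these manipulations close up without spillage.
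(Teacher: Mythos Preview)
Your proposal is correct and follows essentially the same route as the paper: split $P_\Fg$ into $P_\Fk$ plus a remainder supported on $\la\notin Q_+(\Fk)$ via Proposition~\ref{prp:Pk}, use the PBW ordering $\Fr_-<\Fk<\Fr_+$ to peel off a factor in $\Fr_\pm$, push it across $\tilde y_j$ (resp.\ $\tilde x_j$) using the cross relation and the axioms $\Fr_\pm\cdot B_0\subset B_\pm$, and invoke Lemma~\ref{lem:V-and-V'} and Proposition~\ref{prp:hc}. Your treatment of part~(ii) is in fact a touch more direct than the paper's (which rephrases the same identity as an application of~(i) to the tautological relation $\sum_j x_j\dm{\Fk}y_j - v = 0$), and your bookkeeping with the $U(\Fk)'$ factors in $\tilde y_j$ spells out a detail the paper leaves implicit.
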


\begin{proof}
(i) It suffices to show that for all $x,y\in A'_\Fk$, and all $\la\in Q_+$,
$x P_\Fg^\la y + \II_\Fg$ is congruent to $x P_\Fk^\la y+\II_\Fg$ modulo $V_+$ and modulo $V_-$.
Write $\la=\la'+\la''$ where $\la'\in Q_+(\Fk)=\BZ_{\ge 0}S$, and $\la''\in\BZ_{\ge 0}(\Pi\setminus S)$.
If $\la''=0$, then $x P_\Fg^\la y +\II_\Fg=x P_\Fk^\la y+\II_\Fg$, by Proposition \ref{prp:Pk}.
If $\la''\neq 0$, by choice of ordered basis for $\Fg$ (in which $\Fr_-<\Fk<\Fr_+$ see Section \ref{sec:lie}), each term in $xP_\Fg^\la y$ has the form $ae_\ga y$ for some $a\in A'_\Fg$ and some root vector $e_\ga\in\Fr_+$.
Modulo $\II_\Fg$ we can replace multiplication by action:
$ae_\ga y+\II_\Fg=a(e_\ga \cdot y)+\II_\Fg$. By assumption on $B$, we have $\Fr_+\cdot B_0\subset B_+$.
Therefore each term with $\la''\neq 0$ belongs to $\pi(A_\Fg'B_+)$ which also equals $\pi(A_\Fg'M_+)$.
By Lemma \ref{lem:V-and-V'}, the latter equals $V_+$.
The proof that $\sum_{\la''\neq 0} xP_{\Fg}^\la y+\II_\Fg \in V_-$ is analogous.

(ii) Suppose that in $\CZ_\Fg$ we have \eqref{eq:cutting-assumption}. Let $v=\sum_j x_j\dm{\Fk}y_j$.
This can be written as the following relation in $\CZ_\Fk$:
\begin{equation}\label{eq:pf-cutting0}
\sum_j x_j\dm{\Fk}y_j + (-1)\dm{\Fk}v = 0.
\end{equation}
Thus, by part (i), we have
\begin{equation}\label{eq:pf-cutting1}
\sum_j i(x_j)\dm{\Fg}i(y_j) - 1\dm{\Fg}i(v) = z \in J_\Fk.
\end{equation}
But, by assumption in \eqref{eq:cutting-assumption}, we also have
\begin{equation}\label{eq:pf-cutting2}
\sum_j i(x_j)\dm{\Fg}i(y_j)=u\in I_\Fk.
\end{equation}
Combining \eqref{eq:pf-cutting1} and \eqref{eq:pf-cutting2} we obtain
\begin{equation}\label{eq:pf-cutting3}
z + i(v) = u.
\end{equation}
Since $z\in J$ and $u\in I$ we have $i(v)\in I$.
By Proposition \ref{prp:hc}, we conclude that $i(v)=0$.
Since $i$ is injective, $v=0$.
Substituting $v=0$ into \eqref{eq:pf-cutting0}, we obtain \eqref{eq:cutting-conclusion}.
Likewise, substituting $v=0$ into \eqref{eq:pf-cutting3}, we conclude that $u=z\in J$.
\end{proof}

\begin{cor} \label{cor:bimodule}
The following bimodule-like\footnote{The ``-like'' is referring to the fact that $i(\CZ_\Fk)$ is not a subalgebra of $\CZ_\Fg$.} properties of $V_\pm$ hold:
\begin{enumerate}[{\rm (i)}]
\item $V_+\dm{\Fg} i(\CZ_\Fk)\subset V_+$ and $i(\CZ_\Fk)\dm{\Fg}V_-\subset V_-$,
\item $i(\CZ_\Fk)\dm{\Fg}I\dm{\Fg}i(\CZ_\Fk)\subset I$.
\end{enumerate}
\end{cor}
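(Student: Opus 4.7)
The plan is to derive (ii) formally from (i) and then to establish (i) by direct computation. For (ii), recall that $V_+$ is a left and $V_-$ a right $\CZ_\Fg$-ideal: for any $x \in i(\CZ_\Fk)$ and $u = u_- + u_+ \in I$, the product $x \dm{\Fg} u_+$ lies in $V_+$ by left-ideal-ness, while $x \dm{\Fg} u_- \in V_-$ by (i); the symmetric version handles right multiplication by $i(\CZ_\Fk)$. So $i(\CZ_\Fk) \dm{\Fg} I \dm{\Fg} i(\CZ_\Fk) \subset I$, which is (ii).

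For (i), I would focus on $V_+ \dm{\Fg} i(\CZ_\Fk) \subset V_+$; the other inclusion is analogous (and can be transported via the Cartan antiautomorphism $\theta$). Since $V_+$ is the left $\CZ_\Fg$-ideal generated by $\overline{M_+}$, associativity of $\dm{\Fg}$ reduces the task to verifying $\overline m \dm{\Fg} i(\bar z) \in V_+$ for all $m \in M_+$ and $z \in A'_\Fk$. Expanding the diamond product gives $\overline m \dm{\Fg} i(\bar z) = \sum_{\la \in Q_+} \pi(m P_\Fg^\la z)$, a finite sum modulo $\II_\Fg$ by local $\Fg$-finiteness of $B$. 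I would also first record the mild strengthening $V_+ = \pi(A'_\Fg B_+)$ of Lemma \ref{lem:V-and-V'}(i), which follows from $B_+ = B M_+$, so the goal becomes showing $m P_\Fg^\la z \in A'_\Fg B_+ + \II_\Fg$ for each $\la$.

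The verification splits on whether $\la \in Q_+(\Fk)$. For $\la \in Q_+(\Fk)$, Proposition \ref{prp:Pk} (via the identity $P_\Fg^\la = P_\Fk^\la$ that can be extracted from its proof) gives $m P_\Fg^\la z \in B_+ \cdot A'_\Fk$. I would then establish an auxiliary inclusion $B_+ A'_\Fk \subset A'_\Fg B_+ + \II_\Fg$ via a PBW analysis of $U'(\Fk) = U(\Fk_-) U'(\Fh) U(\Fk_+)$: terms with a $U(\Fk_+)$-tail die modulo $A'_\Fg \Fg_+$, while $\Fk_-$-factors can be moved past $B_+$-elements via $b f \equiv -(-1)^{|b||f|}(f \cdot b) \mod \Fg_- A'_\Fg$, the correction $f \cdot b$ remaining in $B_+$ because $\Fk \subset \Fp_+$ acts on the $\Fp_+$-submodule $B_+$. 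For $\la \notin Q_+(\Fk)$, I would follow the strategy in the proof of Theorem \ref{thm:stabcut}(i): the ordered basis $\Fr_- < \Fk < \Fr_+$ forces the rightmost factor of each monomial $e \in \CB_\la$ to lie in $\Fr_+$, so each term of $m P_\Fg^\la z$ takes the shape $N \cdot e_\ga \cdot z$ with $e_\ga \in \Fr_+$ and $N \in A'_\Fg$. Replacing $e_\ga z$ by the action $[e_\ga, z]$ modulo $A'_\Fg \Fg_+$ and expanding $z = \sum c_i v_i$ with $c_i \in B_0$ and $v_i \in U'(\Fk)$, the critical hypothesis $\Fr_+ \cdot B_0 \subset B_+$ produces a $B_+$-factor, while the identity $[\Fr_+, U(\Fk)] \subset U(\Fk) \Fr_+$ sends the $c_i [e_\ga, v_i]$-contributions into $A'_\Fg \Fg_+ \subset \II_\Fg$.

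The main obstacle I anticipate is the bookkeeping at the tail of the $\la \notin Q_+(\Fk)$ case: after extracting a $B_+$-factor, one still needs to commute the trailing $v_i \in U'(\Fk)$ past it while left-multiplying by the non-trivial $N$, i.e., to show $N \cdot B_+ \cdot v_i \subset A'_\Fg B_+ + \II_\Fg$. Applying the Case A reduction produces a $\Fg_- A'_\Fg$-residue which is not automatically preserved under left multiplication by $N$, because $\II_\Fg$ is not a two-sided ideal of $A'_\Fg$. Resolving this cleanly will likely require an auxiliary induction on the PBW-degree of the $U(\Fg_+)$-part of $N$ (equivalently, of the monomial $e \in \CB_\la$), with careful tracking of the super-commutators $[N, f]$ for $f \in \Fg_-$, so as to verify that the specific residues produced by Case A do in fact lie in $A'_\Fg B_+ + \II_\Fg$.
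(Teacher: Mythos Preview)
Your approach is essentially the paper's, and your derivation of (ii) from (i) matches exactly. The case split on $\la \in Q_+(\Fk)$ versus $\la \notin Q_+(\Fk)$, the appeal to Proposition~\ref{prp:Pk}, and the strengthening $V_+=\pi(A'_\Fg B_+)$ are all the same.

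Where you diverge is in handling the auxiliary inclusion $B_+ A'_\Fk \subset A'_\Fg B_+ + \II_\Fg$: you work modulo $\II_\Fg$, discarding $\Fk_-$-factors into $\Fg_- A'_\Fg$, and this is precisely what manufactures your anticipated obstacle. The paper instead proves the stronger inclusion $B_+ A'_\Fk \subset A'_\Fk B_+$ on the nose in $A'_\Fg$, with no quotient: for $b \in B_+$ and $x \in \Fk$ the cross relation $bx = (-1)^{|b||x|}(xb - x\cdot b)$ lands in $\Fk B_+ + B_+$ because $B_+$ is a $\Fp_+$-submodule (hence $\Fk$-stable), and iterating gives $B_+ U'(\Fk) \subset U'(\Fk) B_+$; combined with $B_+ B_0 \subset B_+$ this yields $B_+ A'_\Fk \subset A'_\Fk B_+ \subset A'_\Fg B_+$. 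Your obstacle in the $\la \notin Q_+(\Fk)$ case then dissolves as well: $N \cdot B_+ \cdot v_i \subset A'_\Fg \cdot U'(\Fk) B_+ = A'_\Fg B_+$, no $\II_\Fg$-residue to track and no induction on the PBW-degree of $N$ needed. The paper further streamlines by representing elements of $i(\CZ_\Fk)$ by $c \in B_0$ (using $i(\CZ_\Fk) = \pi(U'(\Fh)\otimes B_0)$, with $U'(\Fh)$ passing through $\dm{\Fg}$ harmlessly), so that $e_\ga \cdot c \in B_+$ directly and the $[e_\ga, v_i]$ contribution never arises.
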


\begin{proof}
Let $a\in A'_\Fg, b\in B_+, c\in B_0$.
Let $\la\in Q_+(\Fg)$.
As in the proof of stabilization,
\begin{equation}
abP_\Fg^\la c+\II_\Fg = ab P_\Fk^\la c + v + \II_\Fg
\end{equation}
for some $v\in A'_\Fg B_+$.
This comes from $\Fr_+\cdot B_0\subset B_+$.
Furthermore, since $B_+B_0\subset B_+$,
\begin{equation}
bP_\Fk^\la c \in B_+ A_\Fk' \subset A_\Fk' B_+\subset A_\Fg' B_+.
\end{equation}
Therefore we conclude that
\begin{equation}
abP_\Fg^\la c+\II_\Fg \in \pi(A_\Fg' B_+)=V_+.
\end{equation}
Hence $V_+\dm{\Fg}i(\CZ_\Fk)\subset V_+$.
The other case is proved analogously.

(ii): This is immediate by part (i), since $I=V_-+V_-$ and $V_+$ (respectively $V_-$) is a left (respectively right) ideal in $\CZ_\Fg$.
\end{proof}

\section{Cutting the Center}

The center and anti-center of an algebra $A$ are given by
\[Z(A)_i=\{z\in A_i\mid za-(-1)^{|z||a|}az=0\text{ for homogeneous $a\in A$}\},\] 
\[AZ(A)_i=\{z\in A_i\mid za-(-1)^{(\bar 1-|z|)|a|}az=0\text{ for homogeneous $a\in A$}\}.\]
The anti-center is a bimodule over the center, and $AZ(A)AZ(A)\subset Z(A)$. The \emph{ghost center} of $A$, introduced in \cite{G00}, is the algebra
\[GZ(A)=Z(A)\oplus AZ(A).\]
Recall the direct sum decomposition \ref{eq:I}.
Consider the linear map 
\begin{equation}
p:\CZ_\Fg\to \CZ_\Fk
\end{equation}
defined as the projection along $I$ followed by inverse of $i$.
Inspired by \cite[Proposition~7]{KO11}, we prove the following corollary.

\begin{cor}
The image of the (ghost) center of $\CZ_\Fg$ under $p$ is contained in the (ghost) center of $\CZ_\Fk$.
Furthermore, $p|_{GZ(\CZ_\Fg)}$ is an algebra homomorphism.
\end{cor}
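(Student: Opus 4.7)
The plan is to use Cutting (Theorem~\ref{thm:stabcut}(ii)) to check that $p$ carries (anti-)central elements to (anti-)central ones, and then to combine Stabilization with Corollary~\ref{cor:bimodule} and the (anti-)centrality hypothesis itself to obtain multiplicativity of $p$ on the whole ghost center.

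First I would fix a homogeneous $z\in GZ(\CZ_\Fg)$ and decompose $z=u+i(w)$ with $u\in I$ and $w\in\CZ_\Fk$ via Proposition~\ref{prp:hc}, so that $p(z)=w$ and $|w|=|z|$. For any homogeneous $y\in\CZ_\Fk$ one has $z\dm{\Fg}i(y)=\epsilon\,i(y)\dm{\Fg}z$ for a sign $\epsilon\in\{\pm1\}$ determined by $|z|$, $|y|$ and whether $z$ is central or anti-central. The key observation is that both $u\dm{\Fg}i(y)$ and $i(y)\dm{\Fg}u$ lie in $I$: splitting $u=u_++u_-$ with $u_\pm\in V_\pm$, the $u_+$ products are controlled by Corollary~\ref{cor:bimodule}(i) together with the fact that $V_+$ is a left ideal of $\CZ_\Fg$, and the $u_-$ products are handled dually via the right-ideal property of $V_-$. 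Therefore $i(w)\dm{\Fg}i(y)-\epsilon\,i(y)\dm{\Fg}i(w)\in I$, and Cutting immediately gives $w\dm{\Fk}y=\epsilon\,y\dm{\Fk}w$ in $\CZ_\Fk$, which is exactly the required (anti-)centrality of $p(z)$. Hence $p(Z(\CZ_\Fg))\subset Z(\CZ_\Fk)$ and $p(AZ(\CZ_\Fg))\subset AZ(\CZ_\Fk)$.

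Next I would verify multiplicativity for homogeneous $z,z'\in GZ(\CZ_\Fg)$ using the same decompositions $z=u+i(w)$, $z'=u'+i(w')$, by showing that only the diagonal term in
\[ z\dm{\Fg}z' = u\dm{\Fg}u' + u\dm{\Fg}i(w') + i(w)\dm{\Fg}u' + i(w)\dm{\Fg}i(w') \]
contributes modulo $I$. Stabilization applied to the trivial identity $w\dm{\Fk}w'-(w\dm{\Fk}w')\dm{\Fk}1=0$ in $\CZ_\Fk$ yields $i(w)\dm{\Fg}i(w')\equiv i(w\dm{\Fk}w')\pmod{I}$ (using $i(1)=1$). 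The two cross terms $u\dm{\Fg}i(w')$ and $i(w)\dm{\Fg}u'$ lie in $I$ by the same split-$u$ argument as in the first paragraph. For the remaining $u\dm{\Fg}u'$, I would rewrite $u=z-i(w)$ so that $u\dm{\Fg}u'=z\dm{\Fg}u'-i(w)\dm{\Fg}u'$, and then analyze $z\dm{\Fg}u'$ by splitting $u'=u'_++u'_-$: the term $z\dm{\Fg}u'_+$ stays in $V_+$ because $V_+$ is a left ideal, while (anti-)centrality of $z$ is invoked exactly once to move $z$ past $u'_-$, placing the result in $V_-\dm{\Fg}\CZ_\Fg\subset V_-$. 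Combining everything gives $z\dm{\Fg}z'\equiv i(w\dm{\Fk}w')\pmod{I}$, so $p(z\dm{\Fg}z')=w\dm{\Fk}w'=p(z)\dm{\Fk}p(z')$.

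The main obstacle I anticipate is precisely the term $u\dm{\Fg}u'$: in general $V_+\dm{\Fg}V_-$ is not contained in $I$, and the bimodule-like properties of Corollary~\ref{cor:bimodule} do not cover this cross-pairing. The resolution is the single, decisive use of the (anti-)centrality of $z$ through the substitution $u=z-i(w)$, which trades an uncontrolled product of one-sided ideal elements for one that the one-sided ideal structure of $V_\pm$ handles cleanly.
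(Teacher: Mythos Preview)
Your proof is correct and follows essentially the same route as the paper's. The only cosmetic difference is that the paper isolates up front the single observation $GZ(\CZ_\Fg)\dm{\Fg}V_\pm = V_\pm\dm{\Fg}GZ(\CZ_\Fg)\subset V_\pm$ (exactly your ``move $z$ past $u'_-$'' step) and then argues directly that $ip(z)\dm{\Fg}ip(z')\equiv z\dm{\Fg}z'\pmod I$, whereas you expand the product into four terms and handle the troublesome $u\dm{\Fg}u'$ by the substitution $u=z-i(w)$; unpacked, the two computations are the same.
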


\begin{proof}
Even though $V_\pm$ are not two-sided ideals of $\CZ_\Fg$,
\begin{equation}\label{eq:pf-c}
GZ(\CZ_\Fg)\dm{\Fg}V_\pm=V_\pm\dm{\Fg}GZ(\CZ_\Fg)\subset V_\pm.
\end{equation}
Indeed, for homogeneous $z\in GZ(\CZ_\Fg)$ and $v\in V_+$ we have $v\dm{\Fg}z=\pm z\dm{\Fg}v\in V_+$ and similarly $z\dm{\Fg}V_-\subset V_-$.
Thus, for $z\in GZ(\CZ_\Fg)$ and $a\in\CZ_\Fk$,
\[ip(z)\dm{\Fg}i(a)\pm i(a)\dm{\Fg}ip(z)\equiv z\dm{\Fg}i(a)\pm i(a)\dm{\Fg}z\equiv 0\mod I\] for appropriate sign.
Thus, by cutting, $p(z)\dm{\Fk}a\pm a\dm{\Fk}p(z)=0$.
This shows $p(z)\in GZ(\CZ_\Fk)$.
If $z,w\in GZ(\CZ_\Fg)$, then \eqref{eq:pf-c} implies that $ip(z)\dm{\Fg}ip(w)\equiv z\dm{\Fg}w\mod I$.
By stabilization, $ip(z)\dm{\Fg}ip(w)\equiv i(p(z)\dm{\Fk}p(w))\mod J$. Thus $p(z\dm{\Fg}w)=pi(p(z)\dm{\Fk}p(w))=p(z)\dm{\Fk}p(w)$.
\end{proof}

\section{Examples}

\subsection{Some Extreme Cases}

\begin{enumerate}[1.]
\item If $S=\Pi$, then $\Fk=\Fg$ and $\Fr_\pm=0$.
If $B$ is a $U(\Fg)$-module which decomposes as $B=B_0\oplus B_1$ where $B_0$ is a $\Fg$-stable subalgebra, $B_1$ is a $\Fg$-stable $B_0$-subbimodule, then we may take $B_+=B_1$, $B_-=0$.
In this case $J=V_+\cap V_- =0$, and stabilization expresses the fact that in this case $i:\CZ_\Fk\to \CZ_\Fg$ actually is an algebra map.
(Recall that $\CZ_\Fk$ is defined in terms of $B_0$.)
An example of this situation is for $\Fg=\Fgl_n$, $B=W(2n)$, $B_0=W(2n)_0$ and $B_1=\oplus_{d\neq 0} W(2n)_d$, where $W(2n)_d$ denotes the $d$-eigenspace of the (adjoint action of the) Euler operator $\sum_i x_i\partial_i$.
\item For any $(\Fg,\Fh,\De_+,S)$ and $U(\Fg)$-module algebra $B$, a compatible triangular decomposition is to take $B_0=B$, $B_\pm=0$.
Then $\CZ_\Fg$ may be regarded as a deformation of $\CZ_\Fk$ controlled by the bimodule $J$.
For example, put $\Fg_k = \Fgl_k\oplus (\Fgl_1)^{\oplus (n-k)}\subset\Fgl_n$ and consider the chain $U(\Fg_1)\subset U(\Fg_2)\subset\cdots\subset U(\Fg_n)\to W(2n)$.
To this chain corresponds a sequence of reduction algebras $\CZ_{\Fg_1}\to \CZ_{\Fg_2}\to\ldots\to\CZ_{\Fg_n}$, each of which is a flat deformation of $U'(\Fh)\cong\BC(t_1,\ldots,t_n)$ tensor the $n$:th Weyl algebra $W(2n)$, and each map is a linear isomorphism.
The algebra $\CZ_{\Fg_1}$ is undeformed (isomorphic to $U'(\Fh)\otimes W(2n)$) and $\CZ_{\Fg_n}$ is the most deformed version --- ``the'' differential reduction algebra $\CD(\Fgl_n)$ from \cite{HO17}.
\end{enumerate}

\subsection{Relations in \texorpdfstring{$\DR(\Fso_8)$}{DR(so(8))}}
\label{sec:so8}
Consider the simple Lie algebra $\Fso_8$. Let $\Pi=\{\al_0,\al_1,\al_2,\al_3\}$ where $\al_0$ is the central node in the Dynkin diagram $D_4$.
We have $4$ maximal parabolics given by $S^{(i)}=\Pi\setminus\{\al_i\}$ for $i=0,1,2,3$.
The corresponding Levis $\Fk^{(i)}$ are
\begin{equation}
\Fk^{(0)}\cong (\Fsl_2)^{\oplus 3}\oplus\Fgl_1,\qquad \Fk^{(i)}\cong \Fgl_4,\quad i=1,2,3.
\end{equation}
Each of these is a Lie algebra of ``$\Fgl$ type'', and therefore complete presentations of their diagonal reduction algebras are known \cite{OK10,KO11,KO17}.
We illustrate how one can combine stabilization, cutting, and the braid group action \cite{KO08}, to calculate relations in $\DR(\Fso_8)$.

It is known that $\DR(\Fg_1\times\Fg_2)\cong D^{-1}(\DR(\Fg_1)\otimes\DR(\Fg_2))$, where $D=U(\Fh_1\times\Fh_2)\setminus\{0\}$.
Therefore $\DR(\Fk^{(0)})$ is a localization of $\DR(\Fsl_2)^{\otimes 3}\otimes \DR(\Fgl_1)$.\footnote{Since there are no nilpotent parts, $P_{\Fgl_1}=1$ and $\DR(\Fgl_1)=D^{-1}U(\Fgl_1\times \Fgl_1)\cong \BC(h)[T]$.}
In particular,
\begin{equation}
E_i\dm{\Fk^{(i)}} F_i = H_i + \xi_2(h_i)F_i\dm{\Fk^{(i)}}E_i + \xi_0(h_i)H_i\dm{\Fk^{(i)}} H_i,
\end{equation}
where $E_i = \overline{e_i\otimes 1}, F_i=\overline{f_i\otimes 1}, H_i=\overline{h_i\otimes 1}$, and $i=1,2,3$.
The coefficients $\xi_0$ and $\xi_2$ are known (and can be calculated by hand, using the extremal projector).
By symmetry it suffices to consider the $i=1$ case.
Stabilizing this relation to $\DR(\Fso_8)$ gives, by Theorem \ref{thm:stabcut}(i),
\begin{equation}\label{eq:so8-1}
E_1\dm{\Fg} F_1 = H_1 + \xi_2(h_1)F_1\dm{\Fg}E_1 + H_1 + \xi_0(h_1)H_1\dm{\Fg} H_1 + \sum_{\be\in\De(\Fr_+)} \xi_\be F_\be \dm{\Fg} E_\be
\end{equation}
where $E_\be = \overline{e_\be\otimes 1}$, and similarly for $F_\be$.
(The bar now stands for coset modulo $\II_\Fg$ rather then $\II_{\Fk^{(1)}}$, also in the definition of $E_1,F_1,H_1$.)
The roots $\De(\Fr_+)=\De_+^{S^{(0)}}$ of the nilradical $\Fr_+$ are all the positive roots of $\Fso_8$ with positive coefficient of $\al_0$:
\begin{equation}
\De_+^{S^{(0)}} = \{\al_0\}\cup \{\al_0+\al_i\}_i \cup \{\al_0+\al_i+\al_j\}_{i\neq j}\cup \{\al_0+\al_1+\al_2+\al_3, \, 2\al_0+\al_1+\al_2+\al_3\}.
\end{equation}
By symmetry, only $5$ of the coefficients need to be determined:
\begin{equation}\label{eq:so8-c}
\xi_\be\quad\text{ for }\be\in\{\al_0,\,\al_0+\al_1,\, \al_0+\al_1+\al_2,\,\al_0+\al_1+\al_2+\al_3,\, 2\al_0+\al_1+\al_2+\al_3\}.
\end{equation}
This is what we learn from stabilizing the weight zero relations using $S^{(0)}=\Pi\setminus\{\al_0\}$.
Next, we will cut using $S^{(23)}=\Pi\setminus\{\al_2,\al_3\}=\{\al_0,\al_1\}$.
The corresponding Levi is $\Fk^{S^{(23)}}\cong\Fgl_3\oplus\Fgl_1$.
The roots of the nilradical here are all positive roots involving $\al_2$ and $\al_3$.
So, cutting \eqref{eq:so8-1} deletes those terms from the $\be$ sum, leaving only
\begin{equation}\label{eq:so8-2}
E_1\dm{\Fk} F_1 = H_1 + \xi_2(h_1)F_1\dm{\Fk}E_1 + H_1 + \xi_0(h_1)H_1\dm{\Fk} H_1 + \xi_{\al_0} F_{\al_0} \dm{\Fk} E_{\al_0} + \xi_{\al_0+\al_1} F_{\al_0+\al_1} \dm{\Fk} E_{\al_0+\al_1},
\end{equation}
where $\Fk=\Fk^{S^{(23)}}$. The critical point here is that \eqref{eq:so8-2} is a relation in $\DR(\Fgl_3\oplus\Fgl_1)$ and therefore two ($\xi_{\al_0}$ and $\xi_{\al_0+\al_1}$) out of the five coefficients from \eqref{eq:so8-c} are explicitly known from \cite[Eq. (6.27)]{KO11}.
The remaining three coefficients can be obtained by applying braid group automorphisms from \cite{KO08}:
First, apply $\check q_{\al_2}$ to both sides of \eqref{eq:so8-1}.
Since $s_{\al_2}(\al_0)=\al_0+\al_2$ and $s_{\al_2}(\al_0+\al_1)=\al_0+\al_1+\al_2$, this determines the two coefficients $\xi_{\al_0+\al_2}$ and $\xi_{\al_0+\al_1+\al_2}$.
Next apply $\check q_{\al_3}$ to get $\xi_{\al_0+\al_1+\al_2+\al_3}$, and lastly apply $\check q_{\al_0}$ to obtain the highest root coefficient $\xi_{2\al_0+\al_1+\al_2+\al_3}$.

\subsection{Differential Reduction Algebra of \texorpdfstring{$\Fsp_{2n}$}{sp(2n)}}
\label{sec:sp2n}
Presentations by generators and relations of the differential reduction algebra associated to $U(\Fgl_n)\to W(2n)$, $e_{ij}\mapsto x_i\partial_j$ have been computed (and more generally $U(\Fgl_n)\to U(\Fgl_{nN})\to W(2nN)$, see \cite{H18}).
In \cite{HW24}, a presentation of the reduction algebra associated to the homomorphism $\varphi:U(\Fsp_4) \to W(4)$ was given.
The general case of $U(\Fsp_{2n})\to W(2n)$ has not been computed.
Let $\CD(\Fsp_{2n})$ be the differential reduction algebra.
Explicitly, $\CD(\Fsp_{2n})=D^{-1}(W(2n)\rtimes U(\Fsp_{2n}))/\II_{\Fsp_{2n}}$, $D=U(\Fh)\setminus\{0\}$, equipped with the diamond product.
Here we comment on the application of stabilization to this problem, using the embedding $\Fk=\Fgl_1\oplus\Fsp_{2n-2}\subset\Fsp_{2n}=\Fg$. 
A compatible triangular decomposition of $B=W(2n)$ is
\begin{equation}
W(2n)=W(2n-2)\oplus (\partial_1 W(2n)+ W(2n)x_1).
\end{equation}
Take now $n=3$.
One of the relations in the reduction algebra $\CD(\Fsp_4)$ is
\[\bar x_1\dm{\Fsp_4}\bar\partial_1 = \xi_{10} + \xi_{11}\bar\partial_1\dm{\Fsp_4}\bar x_1 + \xi_{12}\bar\partial_2\dm{\Fsp_4}\bar x_2\]
where $\bar x_i = x_i\otimes 1+\II_{\Fsp_4}$, $\bar \partial_i = \partial_i\otimes 1+\II_{\Fsp_4}$, and certain coefficients $\xi_{ij}\in U'(\Fh_{\Fsp_4})=\BC(h_1,h_2)$, computed explicitly in \cite{HW24}.
Let us lift this relation to $\CD(\Fsp_6)$ using stabilization.
With the choices as in Example 2 in Section \ref{sec:mod}, we should use index set $\{2,3\}$ for $W(4)$. So in $\CD(\Fgl_1\oplus\Fsp_4)$ we have
\[\bar x_2\dm{\Fk}\bar\partial_2 = \xi_{20} + \xi_{22}\bar\partial_2\dm{\Fk}\bar x_2 + \xi_{23}\bar\partial_3\dm{\Fk}\bar x_3\]
with coefficients in $\BC(h_2,h_3)$.
By Theorem \ref{thm:stabcut}(i), in $\CD(\Fsp_6)$ we have
\[\bar x_2\dm{\Fg}\bar\partial_2 = \xi_{20} + \Xi_{21} \bar\partial_1\dm{\Fg}\bar x_1 + \xi_{22}\bar\partial_2\dm{\Fg}\bar x_2 + \xi_{23}\bar\partial_3\dm{\Fg}\bar x_3.\]
A single new term has appeared, with unknown coefficient $\Xi_{21}\in U'(\Fh_{\Fsp_6})=\BC(h_1,h_2,h_3)$.
The other coefficients remain unchanged.
That is the meaning of ``stabilization''.
The coefficient $\Xi_{21}$ can be determined by using a braid group automorphism \cite{KO08} corresponding to the transposition $(1\,3)$.
In this way one can proceed inductively and determine a presentation for $\CD(\Fsp_{2n})$.

\end{document}